\documentclass[a4paper,onecolumn]{ieeeconf}
\usepackage{generic}
\usepackage{cite}
\usepackage{amsmath,amssymb,amsfonts}
\usepackage{mathrsfs}
\usepackage{algorithmic}
\usepackage{graphicx}
\usepackage{tikz}
\usepackage{pgfplots}
\pgfplotsset{compat=1.18}
\usepackage{empheq}
\usepackage{comment}
\newtheorem{theorem}{Theorem}
\newtheorem{proposition}{Proposition}
\newtheorem{corollary}{Corollary}
\newtheorem{property}{Property}
\newtheorem{remark}{Remark}
\newtheorem{example}{Example}
\newtheorem{definition}{Definition}
\newtheorem{ass}{Assumption}
\newtheorem{lem}{Lemma}
\newenvironment{assumption}{\begin{ass}}{\hfill $\bullet$ \end{ass}}

\newenvironment{lemma}{\begin{lem}}{\hfill $\square$ \end{lem}}
\newcommand{\sign}{\operatorname{sign}}
\usepackage{algorithm,algorithmic}
\usepackage{hyperref}
\hypersetup{hidelinks=true}
\usepackage{textcomp}
\begin{document}
\title{\bf Global $L^1$ Exponential Observer for the Quasilinear Diffusion Equation}
\author{M. C. Belhadjoudja and K. A. Morris
\thanks{M. C. Belhadjoudja is with Université Grenoble Alpes, CNRS, Grenoble-INP, GIPSA-lab, F-38000, Grenoble, France (email: mohamed.belhadjoudja@gipsa-lab.fr)}
\thanks{K. A. Morris is with the Department of Applied Mathematics, University of Waterloo, 200 University Avenue West, Waterloo, ON, Canada, N2L 3G1 (email: kmorris@uwaterloo.ca)}}

\maketitle
\pagenumbering{gobble}

\begin{abstract}
A nonlinear observer is designed for a one-dimensional quasilinear diffusion equation with Neumann boundary conditions, using as an output the state at one end of the spatial domain. The observer consists of a copy of the system, with a nonlinear (in the output) correction term at the boundary. A Lyapunov-based analysis is performed to ensure global exponential stability of the zero equilibrium for the estimation error dynamics in the $L^1$ norm. The result holds for a general nonlinear diffusivity, under the assumption that it is lower-bounded by a positive constant, i.e., that the diffusion equation is uniformly parabolic.

\end{abstract}

\section{Introduction}

The quasilinear diffusion equation arises in a wide range of applications in which the rate of diffusion depends on the state. Examples include heat transfer in high-temperature plasmas \cite[Pages 652--684]{plasma}, porous-media flows \cite{porous1}, and population dynamics \cite[Section 2.4]{vazquez}.

In these applications, the distributed state cannot be measured throughout the spatial domain. State reconstruction, for monitoring and control, must therefore rely on a limited number of measurements, typically available only at accessible locations such as the boundary. This motivates the development of observers capable of estimating the state from boundary measurements. In this paper, we consider this problem for a one-dimensional quasilinear diffusion equation with Neumann boundary conditions, when the available measurement is the state at one end of the spatial domain. 

When designing an observer for a nonlinear dynamical system, one may seek either local or global asymptotic or exponential stability guarantees for the zero equilibrium of the estimation error dynamics. In this paper, we seek global guarantees for general nonlinear diffusivity, a problem that, to the best of our knowledge, has remained open in the literature. For constant diffusivity, there is a rich body of results, including several well-established observer design methods such as backstepping, Kalman filtering, and pole placement, to name a few. In contrast, for quasilinear diffusion, in which the nonlinearity appears in the highest-order derivative term, the only available result known to us that does not linearize the equation is our previous work \cite{camil_back}, where we estimate a bounded region of attraction for a backstepping observer for an equation with general nonlinear diffusivity. Although stabilization results are available for some classes of quasilinear parabolic equations, such as \cite{camil_finite_2,hugo}, extending them to the global observer design problem does not seem straightforward to us.

Our observer consists of a copy of the system with a boundary correction term, that is constructed using the difference of the nonlinear diffusivity potential evaluated at the measured and estimated boundary values. We establish global exponential stability of the zero equilibrium for the estimation error dynamics in the $L^1$ norm. The result requires only that the nonlinear diffusivity be bounded from below by a strictly positive constant. That is, the system must be uniformly parabolic. 

The main difficulty in the quasilinear setting is that a direct Lyapunov analysis based on the squared $L^2$ norm of the estimation error is not well suited to a general nonlinear diffusivity. Indeed, differentiating this functional along the error dynamics produces terms involving spatial derivatives of the nonlinear diffusivity. Controlling these terms generally requires additional structural assumptions on the diffusivity, such as suitable upperbounds or growth conditions, which would restrict the generality of the result. We overcome this difficulty by exploiting the particular structure of the nonlinear boundary correction term and by constructing a Lyapunov functional from the spatial primitive of the estimation error rather than from the error itself. This choice leads to a dissipation identity in which the nonlinear diffusivity enters only through a positive averaged derivative. The lowerbound on the diffusivity therefore suffices to establish exponential decay of the Lyapunov functional. However, this Lyapunov functional is not equivalent to the norm in which we seek to establish global stability. We address this by deriving a separate contraction property for the $L^1$ norm of the estimation error. This is achieved by regularizing the absolute value function and exploiting the fact that the nonlinear diffusivity is strictly increasing, which ensures that the estimation error and the difference of the diffusivity potentials have the same sign. Combining this contraction property with the decay of the Lyapunov functional allows us to prove global exponential stability in $L^1$.

\section{Notations}
We denote by $\mathbb{R}_{>0}$ (resp., by $\mathbb{R}_{\geq 0}$) the set of positive (resp., of nonnegative) real numbers. Given $p\in [1,+\infty)$ and a function $z:[0,1]\to \mathbb{R}$, we let
$|z|_{L^p}^p := \int_0^1 |z(x)|^p dx$. Similarly, given a function
$z:[0,1]\times[0,+\infty)\to\mathbb{R}$ and $t\geq 0$, we let
$|z(\cdot,t)|_{L^p}^p := \int_0^1 |z(x,t)|^p dx$. Furthermore, we let $|z(\cdot,t)|_{\infty}:=\max_{x \in [0,1]}|z(x,t)|$. Given $\beta \in (0,1)$, we denote by $\mathscr{H}^{2+\beta}([0,1])$ 
the space of twice continuously-differentiable functions $z: [0,1] \rightarrow \mathbb{R}$ such that $z''$, the second-order derivative of $z$, is H\"{o}lder continuous with exponent $\beta$, i.e., there exists $C>0$ such that 
$ |z''(x_1)-z''(x_2)| \leq C |x_1-x_2|^{\beta}$ for all $x_1,x_2 \in [0,1]$.  
We denote by $\mathscr{H}_{loc}^{2+\beta,1+\beta/2}([0,1]\times [0,+\infty))$ the space of functions $z :[0,1] \times [0,+\infty) \to \mathbb{R}$ such that the restriction of $z$ to any bounded subset $[0,1] \times L \subset [0,1]\times [0,+\infty)$ belongs to $\mathscr{H}^{2+\beta,1+\beta/2}([0,1] \times L)$, the space of functions $f: [0,1]\times L\to \mathbb{R}$ such that $f_{xx}$ and $f_t$
are H\"older continuous in $x$ with exponent $\beta$ uniformly in $t$
and H\"older continuous in $t$ with exponent $\beta/2$ uniformly
in $x$. Given a function $z:[0,1]\times [0,+\infty)\to \mathbb{R}$, we let $\|z\|_{\infty} := \sup_{(x,t)\in [0,1]\times [0,+\infty)} |z(x,t)|$. Finally, given $a\in \mathbb{R}$, we let
\begin{equation*}
\sign(a) := 
\left\lbrace 
\begin{aligned}
&1 &&\text{if $a>0$},\\
&0 &&\text{if $a=0$}, \\
&-1 &&\text{if $a<0$}.
\end{aligned}
\right.
\end{equation*}

\section{System Description}

Consider the quasilinear diffusion equation
\begin{empheq}[left=\Sigma: \left\{,right=\right.]{align}
z_t &= \Psi(z)_{xx}, \label{sigma_1_z}\\
\Psi(z)_x \big|_{x=0} &= \Psi(z)_x \big|_{x=1} = 0, \label{sigma_21}\\
z(x,0) &= z_o(x), \label{sigma_3_z}
\end{empheq}
where $z(x,t)\in \mathbb{R}$ is the state variable with $x\in [0,1]$ the space variable and $t\geq 0$ the time variable, $z_o$ is the initial condition, and $\Psi : \mathbb{R}\to \mathbb{R}$ is referred to as the \textit{diffusivity potential}, whereas its derivative $\Psi'$ will be referred to as the \textit{diffusivity}. 

\begin{definition}\label{def_classical}
Let $T\in (0,+\infty) \cup \{+\infty\}$. A function $z: [0,1]\times [0,T)\to \mathbb{R}$ is said to be a classical solution to $\Sigma$ on $[0,T)$ if: 
\begin{itemize}
    \item $z\in \mathscr{H}_{loc}^{2+\beta, 1+\beta/2}([0,1]\times [0,T))$ for some $\beta \in (0,1)$.
    \item \eqref{sigma_1_z} holds for all $(x,t) \in (0,1)\times (0,T)$. 
    \item \eqref{sigma_21} holds for all $t\in [0,T)$. 
    \item \eqref{sigma_3_z} holds for all $x\in [0,1]$.
\end{itemize}
A forward-complete classical solution to $\Sigma$ is any classical solution to $\Sigma$ on $[0,+\infty)$.
\hfill $\bullet$
\end{definition}

The following assumptions guarantee the existence and uniqueness of a forward-complete classical solution.

\begin{assumption}\label{ass_1}
$\Psi \in \mathscr{C}^3(\mathbb{R})$, and there exists a constant $\mu$ satisfying
    $$\Psi' \geq \mu > 0. $$
\end{assumption}

\begin{assumption}\label{ass_o}
    There exists $\beta \in (0,1)$ such that
    $z_o\in \mathscr{H}^{2+\beta}([0,1])$. Furthermore, $$\Psi(z_o)_x\big|_{x=0}=\Psi(z_o)_x\big|_{x=1}=0.$$
\end{assumption}

Under Assumptions \ref{ass_o}-\ref{ass_1}, one has the following result. 

\begin{proposition}[Page 491, Theorem 7.4 in \cite{ladyzen}]
    Under Assumptions \ref{ass_1} and \ref{ass_o}, there exists a unique forward-complete classical solution to $\Sigma$.
    \hfill $\square$
\end{proposition}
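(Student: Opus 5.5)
The plan is to obtain the statement from the classical quasilinear parabolic theory in \cite{ladyzen} (Theorem 7.4, Chapter V). This requires rewriting $\Sigma$ in nondivergence form and verifying its structural hypotheses. Since $\Psi\in\mathscr{C}^3$, for classical solutions \eqref{sigma_1_z} reads
\begin{equation*}
z_t=\Psi'(z)z_{xx}+\Psi''(z)z_x^2 .
\end{equation*}
Because $\Psi'\geq\mu>0$, \eqref{sigma_21} is equivalent to the homogeneous Neumann condition $z_x|_{x=0}=z_x|_{x=1}=0$. Likewise, the compatibility condition in Assumption~\ref{ass_o} becomes $z_o'(0)=z_o'(1)=0$. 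This is exactly the zeroth/first-order compatibility required there for $\mathscr{H}^{2+\beta}$ data. The equation is uniformly parabolic on bounded sets of $z$, with coefficients $a(z)=\Psi'(z)$ and $b(z,p)=\Psi''(z)p^2$ that are $\mathscr{C}^1$ in $(z,p)$ (indeed $\mathscr{C}^2$ in $z$ by Assumption~\ref{ass_1}). This smoothness gives the H\"older regularity of coefficients needed for the Schauder-type part of the theorem.

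The heart of such existence theorems is a priori bounds, in this order.
\begin{enumerate}
\item \emph{Sup-norm bound.} Show $\min z_o\leq z\leq \max z_o$ by the maximum principle. At an interior maximum one has $z_x=0$ and $z_{xx}\leq0$, so $z_t\leq0$. At a boundary maximum the Neumann condition, together with Hopf's lemma applied to the linear equation satisfied by $z$ with frozen coefficients, excludes strict growth. Hence $\|z\|_\infty\leq|z_o|_\infty$, uniformly in time. This already gives the growth condition $|b(z,p)|\leq C(|z|)(1+p^2)$ of \cite{ladyzen}: quadratic in $p$ is allowed once $z$ is bounded.
\item \emph{Gradient bound.} Differentiating, $w=\Psi(z)_x$ satisfies $w_t=\Psi'(z)w_{xx}$ with $w=0$ at $x=0,1$. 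This holds because $(\Psi(z))_t=\Psi'(z)\Psi(z)_{xx}$, so $w_t=(\Psi'(z)w_x)_x$, i.e.\ a linear uniformly parabolic equation in divergence form with Dirichlet data. The maximum principle then gives $|w|\leq|\Psi(z_o)_x|_\infty$, hence $|z_x|\leq \mu^{-1}|\Psi(z_o)'|_\infty$ globally in time.
\item \emph{H\"older bound.} With $z$ and $z_x$ bounded, the Ladyzhenskaya--Ural'tseva H\"older estimates for $z_x$, followed by Schauder estimates on each $[0,T]$, yield bounds in $\mathscr{H}^{2+\beta,1+\beta/2}([0,1]\times[0,T])$ depending on $T$.
\end{enumerate}

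Existence on each $[0,T]$ then follows by the Leray--Schauder fixed point (continuation) argument built into Theorem 7.4. Uniqueness follows by taking the difference $e$ of two classical solutions: it satisfies a linear parabolic equation with bounded coefficients, namely
\begin{equation*}
e_t=(\Psi'(z_1)e_x)_x+(\text{lower order in }e),
\end{equation*}
obtained via the mean value theorem, together with homogeneous Neumann data and zero initial condition. An $L^2$ energy estimate plus Gr\"onwall then gives $e\equiv0$. Since the bounds hold on every finite $T$ and solutions on nested intervals coincide by uniqueness, they patch to a forward-complete solution in $\mathscr{H}^{2+\beta,1+\beta/2}_{loc}$.

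The main obstacle is the boundary treatment in step 1 and the precise matching to the hypotheses of \cite{ladyzen}: its Theorem 7.4 is stated for conormal-type boundary conditions in divergence-form problems. I would first write $z_t=(\Psi'(z)z_x)_x$ with conormal condition $\Psi'(z)z_x=0$ and check its conditions (boundedness of the conormal data, compatibility at $t=0$), using the max-principle bounds above to supply the required a priori $\sup$ estimate.
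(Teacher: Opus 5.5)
Your proposal is correct and takes the same route as the paper, which simply invokes Theorem 7.4 of \cite{ladyzen}. Your rewriting of $\Sigma$ as $z_t=(\Psi'(z)z_x)_x$ with the conormal condition $\Psi'(z)z_x=0$ at $x=0,1$, the maximum-principle bound on $z$, and the compatibility $z_o'(0)=z_o'(1)=0$ supply exactly the hypothesis check that the citation leaves implicit. One slip: in step 2 the equation is $w_t=(\Psi'(z)w_x)_x=\Psi'(z)w_{xx}+\Psi''(z)z_xw_x$, not $w_t=\Psi'(z)w_{xx}$, but the maximum principle applies equally, so the gradient bound is unaffected.
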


Let $z$ be the unique forward-complete classical solution to $\Sigma$. Given the regularity of $z$, we can consider the output
\begin{align}
y(t) := z(1,t) \qquad \forall t\geq 0. 
\end{align}
The objective is to design, using the output $y$, an observer for $\Sigma$ that exponentially estimates $z$ in the sense of the $L^1$ norm, regardless of the size of the estimation error at $t= 0$.

\section{Observer Design}

We propose the observer 
\begin{equation*}
\hat{\Sigma} : \left\lbrace 
\begin{aligned}
\hat{z}_t &= \Psi(\hat{z})_{xx}, \\
\Psi(\hat{z})_x \big|_{x=0} &=0, \\
\Psi(\hat{z})_x \big|_{x=1} &= k\left(\Psi(y(t))-\Psi(\hat{y}(t))\right), \\
\hat{z}(x,0) &= \hat{z}_o(x),
\end{aligned}
\right.
\end{equation*}
where $\hat{z}(x,t) \in \mathbb{R}$ is the observer's state, $\hat{y}(t):=\hat{z}(1,t)$ for all $t\geq 0$, $k>0$ is a freely-tunable observer gain, and $\hat{z}_o$ is the observer's initial condition satisfying the following regularity and compatibility conditions, necessary for the existence of a classical solution. 
\begin{assumption}\label{ass_3}
$\hat{z}_o\in \mathscr{H}^{2+\beta}([0,1])$ where $\beta$ is the same as in Assumption \ref{ass_o}, and
\begin{align*}
\Psi(\hat{z}_o)_x\big|_{x=0} = 0, \ \ \Psi(\hat{z}_o)_x\big|_{x=1} = k\left(\Psi(y(0))-\Psi(\hat{y}(0))\right). 
\end{align*}
\end{assumption}

A direct application of \cite[Page 491, Theorem 7.4]{ladyzen} yields the following result. 

\begin{proposition}
Under Assumptions \ref{ass_1}-\ref{ass_3}, there exists a unique forward-complete classical solution to $\hat{\Sigma}$. 
\hfill $\square$
\end{proposition}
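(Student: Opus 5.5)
The plan is to reduce $\hat{\Sigma}$ to the setting of \cite[Page 491, Theorem 7.4]{ladyzen}. That theorem covers quasilinear parabolic equations with nonlinear boundary conditions of the form $a(x,t,u)u_x + \psi(x,t,u)=0$. Its hypotheses are uniform parabolicity, smoothness of the coefficients, the compatibility conditions, and a priori structural bounds.

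First, rewrite the equation in nondivergence-compatible form: $\hat{z}_t = (\Psi'(\hat{z})\hat{z}_x)_x$. The boundary conditions become $\Psi'(\hat{z})\hat{z}_x|_{x=0}=0$ and $\Psi'(\hat{z})\hat{z}_x|_{x=1} + k\Psi(\hat{z}(1,t)) - k\Psi(y(t))=0$. Here $y(t)=z(1,t)$ is a known forcing function. By the previous proposition and Definition~\ref{def_classical}, $y$ is Hölder continuous with exponent $1+\beta/2$ in $t$ on bounded intervals. By Assumption~\ref{ass_1}, $\Psi\in\mathscr{C}^3$ and $\Psi'\ge\mu>0$. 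This gives uniform parabolicity and the smoothness of the coefficients $a(u)=\Psi'(u)$ and $\psi(t,u)=k\Psi(u)-k\Psi(y(t))$ required by the theorem. Assumption~\ref{ass_3} supplies $\hat{z}_o\in\mathscr{H}^{2+\beta}$ and the zeroth-order compatibility condition at both ends.

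The main obstacle is the a priori $L^\infty$ bound on any solution on $[0,T]$. Theorem 7.4 needs this (via conditions on the sign of $\psi u$ for large $|u|$, or an assumed bound) to exclude blow-up and obtain a global solution. I would prove it by a maximum-principle argument. Since $z$ is continuous on $[0,1]\times[0,T]$, set $M_T:=\max(\max_{[0,1]\times[0,T]}|z|,\ |\hat{z}_o|_\infty)$. Suppose $\hat{z}$ attains a value above $M_T$ at a first time, at a maximum point. An interior maximum is ruled out by the parabolic maximum principle, after writing the equation as $\hat{z}_t=\Psi'(\hat{z})\hat{z}_{xx}+\Psi''(\hat{z})\hat{z}_x^2$; a boundary maximum at $x=0$ is ruled out by the Hopf lemma together with the Neumann condition. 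At $x=1$, $\hat{z}(1,t)>M_T\ge y(t)$ and the monotonicity of $\Psi$ give $\Psi(\hat{z})_x|_{x=1}=k(\Psi(y)-\Psi(\hat{y}))<0$. This contradicts the Hopf lemma, which forces $\hat{z}_x(1,t)>0$ at a strict boundary maximum. The usual $\varepsilon$-perturbation, $\hat{z}-\varepsilon t$ or comparison with $M_T+\varepsilon e^{t}$, handles non-strictness. A symmetric argument gives the lower bound, so $|\hat{z}|\le M_T$ on $[0,T]$.

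With this bound, $\psi$ has the required sign structure, namely $\psi(t,u)u\ge0$ for $|u|$ large (again using monotonicity of $\Psi$ and boundedness of $y$ on $[0,T]$). The remaining structural conditions of Theorem 7.4 then hold on every bounded interval. The theorem yields a unique solution in $\mathscr{H}^{2+\beta,1+\beta/2}([0,1]\times[0,T])$ for each $T$. Uniqueness makes these solutions consistent across different $T$, so they patch together into a unique forward-complete classical solution in the sense of Definition~\ref{def_classical}.
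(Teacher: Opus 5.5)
Your proposal is correct and follows the same route as the paper, which simply invokes Theorem 7.4 of Ladyzhenskaya--Solonnikov--Ural'tseva. Your check of its hypotheses (regularity of $y$, the choice $\psi(t,u)=k\Psi(u)-k\Psi(y(t))$, uniform parabolicity, compatibility, and patching over $T$ by uniqueness) makes explicit what the paper leaves implicit. The maximum-principle step is harmless but not needed: the sign condition $\psi(t,u)u\ge 0$ for $|u|\ge\max_{[0,T]}|y|$ follows directly from the monotonicity of $\Psi$, without any bound on $\hat{z}$, and the theorem derives the $L^\infty$ estimate from that condition itself.
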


Define the estimation error 
\begin{align*}
e := z-\hat{z}.
\end{align*}
Subtracting $\hat{\Sigma}$ from $\Sigma$, one readily verifies that $e$ is the unique forward-complete classical solution to 
\begin{equation*}
\Sigma_e : \left\lbrace 
\begin{aligned}
e_t &= w_{xx}, \\
w &:= \Psi(z)-\Psi(z-e), \\
w_x(0,t) &=0, \\
w_x(1,t) &= -kw(1,t), \\
e(x,0) &= e_o(x) := z_o(x) -\hat{z}_o(x). 
\end{aligned}
\right.
\end{equation*}

\section{$L^1$ Global Exponential Stability}

In this section, we establish $L^1$ global exponential stability of the zero equilibrium $\{e=0\}$ for $\Sigma_e$. That is, we prove the following result. 

\begin{theorem}\label{thm1}
Consider the systems $\Sigma$ and $\hat{\Sigma}$ with $k>0$ and observer gain, and let Assumption \ref{ass_1} hold. Then, for any initial conditions $z_o$ and $\hat{z}_o$ satisfying the regularity and compatibility conditions in Assumptions \ref{ass_o} and \ref{ass_3}, the estimation error $e=z-\hat{z}$, that is the unique forward-complete classical solution to $\Sigma_e$, satisfies, for all $t\geq 0$, 
\begin{align}
\int_0^1 |e(x,t)| dx \leq \min \left\{1,C_1 \exp\left(-\lambda \mu t\right)\right\} \int_0^1|e_o(x)|dx, \label{stab_ineq}
\end{align}
where 
\begin{align*}
\lambda &:= \xi^2, \\
C_1 &:= \sqrt{\exp(1)\left(\xi^2+\xi\cot(\xi)\right)},
\end{align*}
with $\xi$ the unique solution in $(0,\pi/2)$ to 
$$\xi \tan(\xi) = k.$$
\hfill $\square$
\end{theorem}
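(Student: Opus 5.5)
The plan is to combine two ingredients: a Lyapunov functional built on the spatial primitive of $e$, which decays exponentially, and a separate $L^1$ contraction property. They are glued together by averaging in time over a window of length $\tau=1/(2\lambda\mu)$, which is what produces the factor $\exp(1)$ in $C_1$.

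\textbf{Step 1: $L^1$ contraction.} I would first show that $t\mapsto \int_0^1|e(x,t)|dx$ is nonincreasing. Take a smooth convex regularization $\phi_\varepsilon$ of $|\cdot|$ with $\phi_\varepsilon'$ odd and nondecreasing (e.g. $\phi_\varepsilon(s)=\sqrt{s^2+\varepsilon^2}$). A subtlety is that $e_t=w_{xx}$ is driven by $w$, not by $e$, so I would test against $\phi_\varepsilon'(w)$ rather than $\phi_\varepsilon'(e)$. Since $\Psi$ is strictly increasing, $w=\Psi(z)-\Psi(z-e)$ has the same sign as $e$. Integration by parts gives
\begin{align*}
\int_0^1 \phi_\varepsilon'(w)e_t\,dx=-k\,\phi_\varepsilon'(w(1,t))w(1,t)-\int_0^1\phi_\varepsilon''(w)w_x^2\,dx\le 0 .
\end{align*}
Letting $\varepsilon\to0$, $\phi_\varepsilon'(w)\to\sign(w)=\sign(e)$ pointwise, so in integrated-in-time form $\int_0^1|e(x,t_2)|dx\le\int_0^1|e(x,t_1)|dx$ for $t_2\ge t_1$. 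The limit is justified by dominated convergence, using the classical regularity of $e$. This already gives the bound with constant $1$ in the $\min$.

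\textbf{Step 2: Lyapunov decay.} Set $\eta(x,t):=\int_0^x e(s,t)ds$. Then $\eta(0,t)=0$, and $w_x(0,t)=0$ gives $\eta_t=w_x$, hence $\eta_t(1,t)=-kw(1,t)$. Write $w=a e$ with $a:=\int_0^1\Psi'(z-(1-\theta)e)d\theta\ge\mu$. Define
\begin{align*}
V(t):=\int_0^1\eta^2dx+\frac1k\eta(1,t)^2 .
\end{align*}
Integrating by parts, the boundary terms $2\eta(1)w(1)$ cancel and we get $\dot V=-2\int_0^1 a e^2dx\le-2\mu\int_0^1\eta_x^2dx$. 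The key inequality is the Wirtinger-type bound
\begin{align*}
\int_0^1\eta_x^2dx\ge\xi^2\Big(\int_0^1\eta^2dx+\tfrac1k\eta(1)^2\Big)\quad\text{for }\eta(0)=0 .
\end{align*}
It follows from the Rayleigh quotient: the Euler--Lagrange problem is $\eta''=-\lambda\eta$, $\eta(0)=0$, $\eta'(1)=\lambda\eta(1)/k$. Its eigenfunctions are $\sin(sx)$ with $s\tan s=k$, and the smallest eigenvalue is $\lambda=\xi^2$. This step, which needs a correct spectral/variational justification, is where I expect the main care to be needed. It yields $V(t)\le e^{-2\lambda\mu t}V(0)$. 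Since $|\eta|\le|e_o|_{L^1}$, we also have $V(0)\le(1+1/k)|e_o|_{L^1}^2$.

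\textbf{Step 3: back to $L^1$.} Integrating $\dot V\le-2\mu|e|_{L^2}^2$ over $[t-\tau,t]$ gives $\int_{t-\tau}^t|e(\cdot,s)|_{L^2}^2ds\le V(t-\tau)/(2\mu)$. We have $|e|_{L^1}\le|e|_{L^2}$, and by Step 1 the $L^1$ norm at time $t$ is below its value at every $s\in[t-\tau,t]$. Hence
\begin{align*}
|e(\cdot,t)|_{L^1}^2\le\frac{1}{2\mu\tau}V(t-\tau)\le\lambda\, e^{1}e^{-2\lambda\mu t}(1+\tfrac1k)|e_o|_{L^1}^2
\end{align*}
for $t\ge\tau=1/(2\lambda\mu)$. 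Using $k=\xi\tan\xi$ gives $\lambda(1+1/k)=\xi^2+\xi\cot\xi$, i.e. exactly the constant $C_1^2$. For $t<\tau$ we have $C_1e^{-\lambda\mu t}\ge\sqrt{\xi^2+\xi\cot\xi}\ge1$, since $\xi\cot\xi+\xi^2\ge1$ on $(0,\pi/2)$. So the $\min$ in \eqref{stab_ineq} is achieved by the contraction bound there, and the two regimes combine to give \eqref{stab_ineq}.
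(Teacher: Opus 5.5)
Your argument is correct and follows the paper's proof in its architecture and constants. You use the same primitive-based functional (your $V$ is twice the paper's), the same $L^1$ contraction from a regularized absolute value combined with $\sign(w)=\sign(e)$, and the same averaging window $1/(2\lambda\mu)$ that produces the factor $\exp(1)$ in $C_1$. In Step 1 the paper tests with $\varphi_\varepsilon'(e)$ and splits off $\varphi_\varepsilon'(e)-\varphi_\varepsilon'(w)$, whose space--time integral against $w_{xx}$ tends to zero. That split is exactly the identification of $\lim_{\varepsilon\to 0}\int_{t_1}^{t_2}\!\int_0^1\phi_\varepsilon'(w)e_t\,dx\,dt$ with $\int_0^1|e(x,t_2)|dx-\int_0^1|e(x,t_1)|dx$, which you leave implicit.

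The one genuine divergence is the Poincaré-type inequality. Your variational sketch only computes the eigenpairs of the Euler--Lagrange problem. To make it a proof you must add the following:
\begin{itemize}
\item the infimum of the Rayleigh quotient over $\{\eta\in H^1(0,1):\eta(0)=0\}$ is attained, by the direct method with the compact embedding $H^1(0,1)\hookrightarrow C([0,1])$ so that the boundary term $\eta(1)^2/k$ passes to the limit;
\item the minimizer is a classical solution of your Euler--Lagrange problem with $\lambda$ equal to the minimum;
\item hence the minimum equals $s^2$ for some positive root of $s\tan s=k$, and is therefore at least $\xi^2$, since $\xi$ is the smallest such root.
\end{itemize}

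The paper avoids existence and regularity entirely with a ground-state (Riccati) substitution. The function $\rho(x)=\xi\cot(\xi x)$, the logarithmic derivative of $\sin(\xi x)$, satisfies $\rho'+\rho^2=-\lambda$ and $\rho(1)=\lambda/k$. Expanding $0\le\int_\delta^1(\eta'-\rho\eta)^2dx$ then gives the inequality directly. The singularity of $\rho$ at $x=0$ is controlled by $\rho(\delta)\eta(\delta)^2\le\delta\max|\eta'|^2\to 0$. Your route explains why $\xi^2$ is the optimal constant; the paper's is shorter and fully elementary.

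Finally, for $t<\tau$ you invoke $\xi^2+\xi\cot\xi\ge 1$ on $(0,\pi/2)$. This is genuinely needed there and deserves a proof. The paper supplies one in Remark~\ref{rem1} by showing that $\xi\mapsto\xi\cot\xi+\xi^2$ is increasing on $(0,\pi/2)$ with limit $1$ at $0^+$.
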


\begin{remark}\label{rem1}
Since $\xi \in (0,\pi/2)$, then $\lambda \in (0,\pi^2/4)$. To determine the range of $C_1$, we analyze the variations of the function $f(\xi) := \xi \cot(\xi) + \xi^2$ on $(0,\pi/2)$. Using $\frac{d}{d\xi}\cot(\xi) = -\left(1+\cot^2(\xi)\right)$, we have
$f'(\xi) = \cot(\xi)+\xi-\xi\cot^2(\xi)$, so that, multiplying by $\sin^2(\xi)>0$,
\begin{align*}
\sin^2(\xi)f'(\xi) &= \sin(\xi)\cos(\xi) + \xi \left(\sin^2(\xi)-\cos^2(\xi)\right) \\
&= \frac{1}{2}\sin(2\xi)-\xi\cos(2\xi)=: g(\xi).
\end{align*}
Since $g(0)=0$ and $g'(\xi) = 2\xi\sin(2\xi)>0$ for all $\xi \in (0,\pi/2)$, we have $g>0$,
hence $f'>0$ on $(0,\pi/2)$. As $\lim_{\xi\to 0^+}\xi\cot(\xi)=1$ and $\cot(\pi/2)=0$, it
follows that $1<f(\xi)<\pi^2/4$ on $(0,\pi/2)$. Consequently,
\begin{align*}
C_1 \in \left(\exp\left(\frac{1}{2}\right),\exp\left(\frac{1}{2}\right)\frac{\pi}{2}\right).
\end{align*}
\hfill $\bullet$
\end{remark}

\subsection{Preliminary Lemmas}

In order to prove Theorem \ref{thm1}, we will need the lemmas established in this subsection. 

We start by proving the following technical lemma. 

\begin{lemma}\label{lem1}
Define the function 
\begin{align*}
\theta(x,t) := \int_0^1\Psi'\left(\hat{z}(x,t)+\tau e(x,t)\right)d\tau \geq \mu. 
\end{align*}
Then, 
\begin{align*}
w = \theta e.
\end{align*}
In particular, 
\begin{align*}
\sign(w) = \sign(e).
\end{align*}
\end{lemma}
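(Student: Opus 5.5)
The identity $w=\theta e$ is the fundamental theorem of calculus applied along the segment joining $\hat{z}$ to $z$. Everything is pointwise in $(x,t)\in[0,1]\times[0,+\infty)$, so we fix such a point. Since $z$ and $\hat z$ are classical solutions, they are continuous, hence $e(x,t)$ and $\hat z(x,t)$ are well-defined real numbers.

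\textbf{Step 1: the identity.} Consider the scalar function $\phi(\tau):=\Psi(\hat{z}(x,t)+\tau e(x,t))$ for $\tau\in[0,1]$. By Assumption \ref{ass_1}, $\Psi\in\mathscr{C}^3(\mathbb{R})$, so $\phi$ is $\mathscr{C}^1$ with $\phi'(\tau)=\Psi'(\hat z+\tau e)\,e$. The fundamental theorem of calculus gives
\begin{align*}
\Psi(z)-\Psi(z-e)=\phi(1)-\phi(0)=\int_0^1\Psi'(\hat z+\tau e)\,d\tau\; e=\theta e,
\end{align*}
using $z=\hat z+e$ and $z-e=\hat z$. Since $e$ is independent of $\tau$, it can be factored out of the integral. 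This proves $w=\theta e$.

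\textbf{Step 2: the lower bound.} By Assumption \ref{ass_1}, $\Psi'\ge\mu$ everywhere on $\mathbb{R}$, so the integrand is at least $\mu$ for every $\tau\in[0,1]$. Integrating over an interval of length one gives $\theta(x,t)\ge\mu>0$. Continuity of $\Psi'$ ensures that the integral is a well-defined Riemann integral.

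\textbf{Step 3: the sign relation.} Since $\theta>0$, multiplication by $\theta$ preserves the sign. If $e>0$, then $w>0$; if $e<0$, then $w<0$; and if $e=0$, then $w=0$. Hence $\sign(w)=\sign(e)$.

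No step is a real obstacle. The only point requiring care is that the lower bound $\Psi'\ge\mu$ holds globally on $\mathbb{R}$, not only near the solution values. This global bound is what makes the estimate $\theta\ge\mu$ uniform in $(x,t)$ and independent of the size of $e$, which is essential for the global result in Theorem \ref{thm1}.
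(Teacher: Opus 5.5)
Your proposal is correct and follows the same argument as the paper: the fundamental theorem of calculus along the segment from $\hat{z}$ to $z=\hat{z}+e$ gives $w=\theta e$, the global bound $\Psi'\geq\mu$ gives $\theta\geq\mu$, and the sign relation follows from $\theta>0$. Your added remarks on continuity and on the bound holding globally on $\mathbb{R}$ are accurate.
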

\begin{proof}
The inequality $\theta \geq \mu$ is a direct consequence of Assumption \ref{ass_1}. To establish $w=\theta e$, note that 
\begin{align*}
\theta(x,t)e(x,t) &= e(x,t) \int_0^1 \Psi'\left(\hat{z}(x,t)+\tau e(x,t)\right)d\tau \\
&= \left[\Psi\left(\hat{z}(x,t)+\tau e(x,t)\right)\right]_{\tau=0}^{\tau=1} \\
&= \Psi(z(x,t)) - \Psi(\hat{z}(x,t)) \\
&= w(x,t).
\end{align*}
Finally, $\sign(w) = \sign(e)$ follows from $w=\theta e$ and $\theta \geq \mu >0$. 
\end{proof}

Next, we introduce the function 
\begin{align}
V(t) := \frac{1}{2}\int_0^1E(x,t)^2dx + \frac{1}{2k} M(t)^2,
\end{align}
where 
\begin{align}
E(x,t) := \int_0^x e(y,t)dy, \ M(t) := E(1,t). 
\end{align}
We derive a differential inequality on $V$ in the next lemma. 
\begin{lemma}\label{lem2}
It holds that 
\begin{align}
\dot{V}(t) = -\int_0^1\theta (x,t) e(x,t)^2 dx \leq - \mu \int_0^1 e(x,t)^2 dx. \label{V_dot}
\end{align}
\end{lemma}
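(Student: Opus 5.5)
We differentiate $V$ directly along the classical solution of $\Sigma_e$, using $e_t = w_{xx}$ to express the time derivatives of $E$ and $M$ in terms of $w$. The regularity in Definition \ref{def_classical} justifies differentiating under the integral sign. The key computation is
\begin{align*}
E_t(x,t) = \int_0^x w_{yy}(y,t)\,dy = w_x(x,t) - w_x(0,t) = w_x(x,t),
\end{align*}
where we used the boundary condition $w_x(0,t)=0$. In particular, $\dot M(t) = w_x(1,t) = -k w(1,t)$ by the Robin-type condition at $x=1$.

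Next, we compute the two contributions to $\dot V$. For the first term, we have $\frac{d}{dt}\frac12\int_0^1 E^2dx = \int_0^1 E\, w_x\,dx$. We integrate by parts, using $E(0,t)=0$, $E(1,t)=M(t)$ and $E_x = e$:
\begin{align*}
\int_0^1 E w_x\,dx = M(t) w(1,t) - \int_0^1 e\, w\,dx.
\end{align*}
For the second term, $\frac{d}{dt}\frac{1}{2k}M^2 = \frac1k M\dot M = -M(t)w(1,t)$. The boundary contributions $\pm M(t)w(1,t)$ cancel exactly. This cancellation is the point of the weight $1/(2k)$, and it is the step to check carefully. We are left with $\dot V = -\int_0^1 e w\,dx$.

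Finally, we invoke Lemma \ref{lem1}, which gives $w=\theta e$, so that $\dot V = -\int_0^1\theta e^2dx$. The bound $\theta\ge\mu$ then yields the inequality in \eqref{V_dot}. The only delicate points are technical: the justification of exchanging $d/dt$ and $\int$, and the fact that $E_t = w_x$ holds up to the boundary, including at $x=1$ by continuity. Both follow from $e\in\mathscr{H}_{loc}^{2+\beta,1+\beta/2}$ and $\Psi\in\mathscr{C}^3$. These ensure that $w_{xx}$ is continuous on $[0,1]\times[0,+\infty)$ and that the identity $e_t=w_{xx}$ extends to the closed domain.
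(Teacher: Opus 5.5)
Your proposal is correct and follows essentially the same route as the paper. You write $E_t = w_x$ using $e_t = w_{xx}$ and $w_x(0,t)=0$, integrate by parts so that the boundary terms $\pm M(t)w(1,t)$ cancel via the Robin condition and the $1/(2k)$ weight, and conclude with $w=\theta e$ and $\theta\ge\mu$ from Lemma \ref{lem1}.
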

\begin{proof}
Differentiating $V$ in time, note that we have 
\begin{align*}
\dot{V}(t) =&~ \int_0^1E(x,t) E_t(x,t) dx + \frac{1}{k}E(1,t)E_t(1,t) \\
=&~ \int_0^1 E(x,t)\left(\int_0^xe_t(y,t)dy \right)dx + \frac{1}{k}E(1,t)\int_0^1e_t(y,t)dy \\
=&~\int_0^1 E(x,t)\left(\int_0^xw_{yy}(y,t)dy \right)dx + \frac{1}{k}E(1,t)\int_0^1w_{yy}(y,t)dy.
\end{align*}
Applying integration by parts and using the boundary conditions $w_x(0,t)=E(0,t)=0$ and $w_x(1,t) = - k w(1,t)$, we obtain 
\begin{align*}
\dot{V}(t) =&~ \int_0^1 E(x,t)w_x(x,t)dx + \frac{1}{k}E(1,t) w_x(1,t) \\
=&~ E(1,t)w(1,t) - E(1,t)w(1,t) - \int_0^1E_x(x,t)w(x,t)dx \\
=&~ -\int_0^1 e(x,t)w(x,t)dx.
\end{align*}
Using $w=\theta e$ from Lemma \ref{lem1}, we have $we=\theta e^2$. Hence,
\begin{align*}
\dot{V}(t) = -\int_0^1 \theta(x,t) e(x,t)^2 dx. 
\end{align*}
Finally, using $\theta \geq \mu$ from Lemma \ref{lem1}, we obtain \eqref{V_dot}. 
\end{proof}

Now, using the inequality in Lemma \ref{lem2}, we would like to analyze $V$. To this end, we will need the following Poincaré-type inequality. 
 
\begin{lemma}\label{lem3}
Let $k>0$ and $\lambda$ be as in Theorem \ref{thm1}. For every $\phi \in \mathscr{C}^1([0,1])$
with $\phi(0)=0$,
\begin{align}\label{eq_poincare}
\int_0^1 \phi(x)^2dx + \frac{\phi(1)^2}{k} \leq \frac{1}{\lambda}\int_0^1 \phi'(x)^2dx .
\end{align}
\end{lemma}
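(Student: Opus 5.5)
We will prove \eqref{eq_poincare} with a ground-state (Picone-type) substitution built from the first eigenfunction of the Sturm--Liouville problem $-\varphi''=\lambda\varphi$ on $(0,1)$ with $\varphi(0)=0$ and the Robin condition $\varphi'(1)=k\varphi(1)$ inherited from the boundary term. The natural candidate is $\varphi(x)=\sin(\xi x)$ with $\lambda=\xi^2$. The Robin condition reads $\xi\cos\xi=k\sin\xi$, i.e. $\cot(\xi)=k/\xi$. Before anything else we must check this against the relation $\xi\tan(\xi)=k$ stated in Theorem~\ref{thm1}, because the sign and form of the boundary term $\phi(1)^2/k$ decide which transcendental equation actually appears. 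We will redo the computation carefully in this first step. It makes $\varphi>0$ on $(0,1]$ with $\xi\in(0,\pi/2)$, which is what the substitution needs.

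The key step is the following. Write $\phi=\varphi\,\eta$ on $(0,1]$, where $\eta=\phi/\varphi$. This is well defined and $\mathscr{C}^1$ on $(0,1]$, and $\varphi\eta\to 0$ at $x=0$ because $\phi(0)=0$ and $\varphi(x)\sim\xi x$. Expanding $\phi'^2=\varphi'^2\eta^2+2\varphi\varphi'\eta\eta'+\varphi^2\eta'^2$ and integrating by parts on $[\epsilon,1]$ gives
\begin{align*}
\int_\epsilon^1 \phi'^2dx=\int_\epsilon^1\varphi^2\eta'^2dx+\Big[\varphi\varphi'\eta^2\Big]_\epsilon^1-\int_\epsilon^1\varphi\varphi''\eta^2dx .
\end{align*}
Using $-\varphi''=\lambda\varphi$, the last integral equals $\lambda\int_\epsilon^1\phi^2dx$. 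The boundary term at $1$ equals $\frac{\varphi'(1)}{\varphi(1)}\phi(1)^2$. The boundary term at $\epsilon$ is $\frac{\varphi'(\epsilon)}{\varphi(\epsilon)}\phi(\epsilon)^2\approx \phi(\epsilon)^2/\epsilon\to0$, since $\phi(\epsilon)=O(\epsilon)$. Dropping the nonnegative term $\int\varphi^2\eta'^2$ and letting $\epsilon\to0$ yields
\begin{align*}
\int_0^1\phi'^2dx\ \ge\ \lambda\int_0^1\phi^2dx+\frac{\varphi'(1)}{\varphi(1)}\phi(1)^2 .
\end{align*}

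It then remains to match the boundary coefficient. Dividing by $\lambda$, the target \eqref{eq_poincare} requires $\frac{\varphi'(1)}{\lambda\varphi(1)}\ge \frac1k$, i.e. $\xi\cot(\xi)\ge\xi^2/k$. With $k=\xi\tan\xi$ this becomes $\xi\cot\xi\ge\xi\cot\xi$, an equality. So the relation $\xi\tan\xi=k$ is exactly the one that makes the weight on $\phi(1)^2$ come out as $1/k$ after dividing by $\lambda$. In other words, the eigenfunction should satisfy $\varphi'(1)/\varphi(1)=\lambda/k$, rather than the literal Robin condition I first guessed, and this is consistent.

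The main obstacle is that we need $\varphi=\sin(\xi x)$ to stay positive up to $x=1$, so $\xi<\pi/2$, together with the existence and uniqueness of $\xi$. Both follow because $\xi\tan\xi$ increases from $0$ to $+\infty$ on $(0,\pi/2)$. The remaining care is the limit at $x=0$, which is handled above using $\phi\in\mathscr{C}^1$ and $\phi(0)=0$.
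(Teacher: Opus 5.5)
Your proof is correct and is essentially the paper's argument. With $\varphi=\sin(\xi x)$ one has $\varphi'/\varphi=\xi\cot(\xi x)=\rho(x)$ and $\varphi\eta'=\phi'-\rho\phi$, so your ground-state identity is exactly the paper's expansion of $\int_\delta^1(\phi'-\rho\phi)^2dx\geq 0$ using the Riccati relation $\rho'+\rho^2=-\lambda$, with the same $\delta\to 0^+$ limit. The one blemish is your first paragraph: the boundary condition $\varphi'(1)=k\varphi(1)$ would give $\xi\cot\xi=k$, whereas the correct eigenvalue-dependent condition is $\varphi'(1)=(\lambda/k)\varphi(1)$. That is the condition you actually use at the end, so state it that way from the start.
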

\begin{proof}
Let $\xi$ be the unique solution in $(0,\pi/2)$ to $\xi \tan(\xi) = k$, and $\rho \in \mathscr{C}^1((0,1];\mathbb{R}_{>0})$ be given by
$$\rho(x):=\xi \cot(\xi x).$$
Using $\frac{d}{dx}\cot(\xi x) = -\xi\left(1+\cot^2(\xi x)\right)$, we have
\begin{align}\label{eq_riccati}
\rho'(x)+\rho(x)^2 = -\lambda \quad \forall x\in (0,1),
\end{align}
and
\begin{align}\label{eq_rho1}
\rho(1) = \xi \cot(\xi) = \frac{\xi^2}{\xi \tan \xi} = \frac{\lambda}{k} .
\end{align}
Next, let $\delta \in (0,1)$ and note that
\begin{align*}
0 \leq&~ \int_\delta^1 \left(\phi'(x) - \rho (x)\phi(x)\right)^2dx \\
=&~ \int_\delta^1 \phi'(x)^2 dx - \int_\delta^1 \rho(x)\left(\phi(x)^2\right)'dx + \int_\delta^1 \rho(x)^2\phi(x)^2dx \\
=&~ \int_\delta^1 \phi'(x)^2dx - \rho(1)\phi(1)^2 + \rho(\delta)\phi(\delta)^2 + \int_\delta^1 \left(\rho'(x)+\rho(x)^2\right)\phi(x)^2 dx .
\end{align*}
Substituting \eqref{eq_riccati} and \eqref{eq_rho1} and rearranging,
\begin{align}\label{eq_delta}
\int_\delta^1 \phi'(x)^2dx \geq \frac{\lambda}{k}\phi(1)^2 + \lambda\int_\delta^1\phi(x)^2dx
- \rho(\delta)\phi(\delta)^2 .
\end{align}
Since $\phi\in \mathscr{C}^1([0,1])$ and $\phi(0)=0$, we have
$$|\phi(\delta)| \leq \delta \max_{x\in [0,1]}|\phi'(x)|.$$
Moreover, $\cot(s) \leq 1/s$ for all
$s\in(0,\pi/2)$, so that 
$$\rho(\delta)\leq \frac{1}{\delta}.$$
Hence
$$\rho(\delta)\phi(\delta)^2 \leq \delta \left(\max_{x\in [0,1]}|\phi'(x)|\right)^2 \to 0 \ \ \text{as $\delta \to 0^+$}.$$
Letting $\delta \to 0^+$ in \eqref{eq_delta} and dividing by $\lambda$
yields \eqref{eq_poincare}.
\end{proof}

Using Lemma \ref{lem3}, we can prove the following result. 
\begin{lemma}\label{lem4}
For all $t\geq 0$, it holds that 
\begin{align}
2V(t) \leq \frac{1}{\lambda}\int_0^1e(x,t)^2dx. \label{V_bound}
\end{align}
\end{lemma}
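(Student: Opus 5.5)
We apply Lemma \ref{lem3} with $\phi := E(\cdot,t)$ for each fixed $t\geq 0$. The choice is natural because
$$2V(t) = \int_0^1 E(x,t)^2dx + \frac{1}{k}E(1,t)^2,$$
which is exactly the left-hand side of \eqref{eq_poincare} evaluated at $\phi = E(\cdot,t)$. The right-hand side of \eqref{eq_poincare} then involves $\int_0^1 \phi'(x)^2dx = \int_0^1 e(x,t)^2dx$, since $E_x = e$. Once the hypotheses of the lemma are checked, \eqref{V_bound} follows immediately.

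The steps, in order, are as follows.

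\begin{enumerate}
\item Fix $t\geq 0$ and note that $E(0,t)=\int_0^0 e(y,t)dy = 0$, so $\phi(0)=0$.
\item Verify that $E(\cdot,t)\in\mathscr{C}^1([0,1])$. Since $e = z-\hat z$ is the difference of two classical solutions, $e(\cdot,t)$ is in particular continuous (indeed $\mathscr{C}^2$) on $[0,1]$. By the fundamental theorem of calculus, $E(\cdot,t)$ is then $\mathscr{C}^1$ with $E_x(x,t)=e(x,t)$.
\item Substitute $\phi = E(\cdot,t)$ into \eqref{eq_poincare}, identify the left-hand side with $2V(t)$ and the right-hand side with $\frac{1}{\lambda}\int_0^1 e(x,t)^2dx$, and conclude.
\end{enumerate}

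No step presents a real obstacle. The only point requiring any care is the regularity check in the second step, which rests on the classical-solution regularity in Definition \ref{def_classical}: functions in $\mathscr{H}^{2+\beta,1+\beta/2}_{loc}$ are continuous in $x$ at each fixed $t$, and this is all that is needed.
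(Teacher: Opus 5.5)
Your proposal is correct and follows the paper's proof: apply Lemma~\ref{lem3} with $\phi = E(\cdot,t)$, using $E(0,t)=0$, $E_x = e$, and the identity $2V(t) = \int_0^1 E(x,t)^2dx + E(1,t)^2/k$. Your explicit check that $E(\cdot,t)\in\mathscr{C}^1([0,1])$ follows from the classical-solution regularity is slightly more detailed than the paper, which simply asserts it.
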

\begin{proof}
Since $E(\cdot,t)\in \mathscr{C}^1([0,1])$ and $E(0,t)=0$ for each $t\geq 0$, we can apply Lemma \ref{lem3} with $\phi$ therein replaced by $E(\cdot,t)$, to obtain 
\begin{align*}
\int_0^1E(x,t)^2dx + \frac{E(1,t)^2}{k} \leq \frac{1}{\lambda}\int_0^1e(x,t)^2dx.
\end{align*}
\end{proof}

Combining Lemmas \ref{lem3} and \ref{lem4}, we can establish the following result, which will be key to prove Theorem \ref{thm1}. 
\begin{lemma}\label{lem5}
For all $t\geq 0$, we have  
\begin{align}
V(t) &\leq \exp\left(-2\lambda \mu t\right)V(0), \label{first_ineq} \\
\int_t^{+\infty}\int_0^1e(x,s)^2dx ds &\leq \frac{1}{\mu}\exp(-2\lambda \mu t)V(0). \label{second_ineq}
\end{align}
\end{lemma}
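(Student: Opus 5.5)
The plan is to combine the dissipation identity of Lemma \ref{lem2} with the upper bound of Lemma \ref{lem4}, and then run a Gr\"onwall argument followed by an integration in time.

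\emph{First inequality \eqref{first_ineq}.} By Lemma \ref{lem4}, for every $t\geq 0$,
\[
\int_0^1 e(x,t)^2dx \geq 2\lambda V(t).
\]
Inserting this into \eqref{V_dot} gives the linear differential inequality
\[
\dot V(t)\leq -2\lambda\mu V(t).
\]
Since $V$ is continuously differentiable in $t$ (by the classical regularity of $e$, which is what justified differentiating under the integral in Lemma \ref{lem2}), we multiply by $\exp(2\lambda\mu t)$. This shows that $t\mapsto \exp(2\lambda\mu t)V(t)$ is nonincreasing, and \eqref{first_ineq} follows.

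\emph{Second inequality \eqref{second_ineq}.} Fix $t\geq 0$ and $T>t$, and integrate \eqref{V_dot} over $[t,T]$:
\[
\mu\int_t^T\int_0^1 e(x,s)^2dxds \leq V(t)-V(T)\leq V(t),
\]
where the last step uses $V\geq 0$. We then apply \eqref{first_ineq} to bound $V(t)\leq \exp(-2\lambda\mu t)V(0)$. The left-hand side is nondecreasing in $T$ and bounded uniformly in $T$, so letting $T\to+\infty$ by monotone convergence and dividing by $\mu$ yields \eqref{second_ineq}.

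The only point requiring care is the regularity needed to apply the Gr\"onwall-type step and the fundamental theorem of calculus to $V$. I expect this to follow directly from $e\in \mathscr{H}_{loc}^{2+\beta,1+\beta/2}$, since then $e_t$ is continuous on $[0,1]\times[0,+\infty)$ and $V\in\mathscr{C}^1([0,+\infty))$. Otherwise the argument is routine.
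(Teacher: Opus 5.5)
Your proposal is correct and follows essentially the same route as the paper: you combine \eqref{V_dot} with Lemma \ref{lem4} to get $\dot V\leq -2\lambda\mu V$ and apply Gr\"onwall, then integrate \eqref{V_dot} in time. The only cosmetic difference is that you discard $V(T)$ using $V(T)\geq 0$, whereas the paper uses $V(T)\to 0$ as $T\to+\infty$; both are valid.
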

\begin{proof}
Using \eqref{V_dot} and \eqref{V_bound}, we have 
\begin{align*}
\dot{V}(t) \leq -2\lambda \mu V(t),
\end{align*}
which implies \eqref{first_ineq} by Grönwall's inequality. Furthermore, from \eqref{V_dot}, note that we have 
\begin{align}
\int_0^1e(x,t)^2dx \leq -\frac{1}{\mu}\dot{V}(t). \label{to_int}
\end{align}
Integrating both sides of \eqref{to_int} with respect to time and using \eqref{first_ineq} and the fact that $V(t) \to 0$ as $t\to +\infty$, we find
\begin{align*}
\int_t^{+\infty}\int_0^1e(x,s)^2dx ds &\leq \frac{1}{\mu} V(t) \nonumber \\
&\leq \frac{1}{\mu}\exp(-2\lambda \mu t) V(0).
\end{align*}
\end{proof}

A second key ingredient to prove Theorem \ref{thm1} is Lemma \ref{lem7} below, in which we establish $L^1$ global stability of $\{e=0\}$. In order to prove that lemma, we will need the following technical result. 
\begin{lemma}\label{lem6}
Let $\varepsilon >0$ and $\varphi_{\varepsilon} \in \mathscr{C}^{\infty}(\mathbb{R};\mathbb{R}_{\geq 0})$ be given by 
\begin{align}
\varphi_{\varepsilon}(s) := \sqrt{s^2+\varepsilon^2} - \varepsilon \quad \forall s\in \mathbb{R}. 
\end{align}
Then, the following properties hold. 
\begin{itemize}
    \item[(i)] For all $s\in \mathbb{R}$, we have
    \begin{align*}
    0 \leq |s| - \varphi_{\varepsilon}(s) \leq \varepsilon
    \end{align*}
    \item[(ii)] For all $s \in \mathbb{R}$, we have $|\varphi_{\varepsilon}'(s)| \leq 1$ and 
    \begin{align*}
        \lim_{\varepsilon \to 0}\varphi_{\varepsilon}'(s) = \sign(s). 
    \end{align*}
    \item[(iii)] For all $s\in \mathbb{R}$, we have $\varphi''_{\varepsilon}(s) >0$.
\end{itemize}
\end{lemma}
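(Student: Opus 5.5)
The plan is to verify the three properties of Lemma \ref{lem6} by direct computation. The function $\varphi_\varepsilon$ is explicit and smooth, since $s^2+\varepsilon^2\geq \varepsilon^2>0$ and the square root is smooth on $\mathbb{R}_{>0}$. Nonnegativity follows from $\sqrt{s^2+\varepsilon^2}\geq \varepsilon$. I will compute the first two derivatives once:
\begin{align*}
\varphi_\varepsilon'(s) = \frac{s}{\sqrt{s^2+\varepsilon^2}}, \qquad \varphi_\varepsilon''(s) = \frac{\varepsilon^2}{\left(s^2+\varepsilon^2\right)^{3/2}},
\end{align*}
where the second follows from the quotient rule after simplifying $\sqrt{s^2+\varepsilon^2}-s^2/\sqrt{s^2+\varepsilon^2}$. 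Each item is then read off from these formulas.

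For (i), the lower bound $|s|\geq\varphi_\varepsilon(s)$ is equivalent to $|s|+\varepsilon\geq\sqrt{s^2+\varepsilon^2}$. Both sides are nonnegative, so I will square to get $s^2+2\varepsilon|s|+\varepsilon^2\geq s^2+\varepsilon^2$, which holds. The upper bound $|s|-\varphi_\varepsilon(s)\leq\varepsilon$ is equivalent to $|s|\leq\sqrt{s^2+\varepsilon^2}$, which is immediate by monotonicity of the square root.

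For (ii), the bound $|\varphi_\varepsilon'(s)|\leq 1$ follows from $|s|=\sqrt{s^2}\leq\sqrt{s^2+\varepsilon^2}$. For the limit I will split into two cases. If $s=0$, then $\varphi_\varepsilon'(0)=0=\sign(0)$ for every $\varepsilon$. If $s\neq 0$, continuity gives $\sqrt{s^2+\varepsilon^2}\to|s|$ as $\varepsilon\to 0$, so $\varphi_\varepsilon'(s)\to s/|s|=\sign(s)$.

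For (iii), positivity of $\varphi_\varepsilon''$ is immediate from the formula above, because $\varepsilon>0$. None of these steps is a real obstacle. The only point needing care is the case $s=0$ in (ii), where the convention $\sign(0)=0$ is exactly what makes the pointwise limit match.
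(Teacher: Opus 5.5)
Your proposal is correct and follows essentially the same route as the paper: (i) comes from the sandwich $|s|\leq\sqrt{s^2+\varepsilon^2}\leq|s|+\varepsilon$, and (ii)--(iii) are read off from the explicit formulas for $\varphi_\varepsilon'$ and $\varphi_\varepsilon''$, with $s=0$ handled separately via $\sign(0)=0$. You also verify smoothness and nonnegativity of $\varphi_\varepsilon$, which the paper takes for granted.
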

\begin{proof}
To prove (i), first note that 
\begin{align*}
s^2 \leq s^2 + \varepsilon^2 \leq \left(|s|+\varepsilon\right)^2.
\end{align*}
Hence, 
\begin{align*}
|s| \leq \sqrt{s^2+\varepsilon^2} \leq |s| + \varepsilon. 
\end{align*}
As a consequence, on the one hand, we have 
\begin{align*}
0\leq |s| - \left(\sqrt{s^2+\varepsilon^2} - \varepsilon\right) = |s| - \varphi_{\varepsilon}(s).
\end{align*}
On the other hand, we have 
\begin{align*}
|s|- \sqrt{s^2+\varepsilon^2} \leq 0,
\end{align*}
which implies that 
\begin{align*}
|s|-\varphi_{\varepsilon}(s) = |s| - \left(\sqrt{s^2+\varepsilon^2} - \varepsilon\right) \leq \varepsilon.
\end{align*}

Next, to prove (ii), note that 
\begin{align*}
\varphi_{\varepsilon}'(s) = \frac{s}{\sqrt{s^2+\varepsilon^2}}. 
\end{align*}
It implies that $|\varphi_{\varepsilon}'(s)| \leq 1$, and, for $s\neq 0$,
\begin{align*}
\lim_{\varepsilon \to 0} \varphi'_{\varepsilon}(s) = \frac{s}{|s|} = \sign(s). 
\end{align*}
When $s=0$, we have $\varphi_{\varepsilon}'(s) = \varphi'_{\varepsilon}(0) = 0 = \sign(0) = \sign(s)$. 

Finally, to prove (iii), it is sufficient to note that 
\begin{align*}
\varphi_{\varepsilon}''(s) = \frac{\varepsilon^2}{(s^2+\varepsilon^2)^{3/2}} >0.
\end{align*}
\end{proof}

We can now prove the following $L^1$ global stability result. 

\begin{lemma}\label{lem7}
For all $t_1,t_2\in \mathbb{R}_{\geq 0}$ with $t_1\leq t_2$, we have 
\begin{align}
\int_0^1|e(x,t_2)|dx \leq \int_0^1|e(x,t_1)|dx - k\int_{t_1}^{t_2}|w(1,t)|dt. \label{stab_L1}
\end{align}
\end{lemma}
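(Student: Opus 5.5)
We will prove \eqref{stab_L1} by an $L^1$-contraction argument, using the regularized absolute value $\varphi_{\varepsilon}$ from Lemma \ref{lem6} and then letting $\varepsilon\to 0$. Fix $\varepsilon>0$ and set
\begin{align*}
W_{\varepsilon}(t) := \int_0^1 \varphi_{\varepsilon}(w(x,t))\,\frac{e(x,t)}{w(x,t)}\,dx ?
\end{align*}
Rather than such a hybrid functional, it is cleaner to test the equation $e_t=w_{xx}$ against $\varphi_{\varepsilon}'(w)$. The point is that $\varphi_\varepsilon'(w)\to\sign(w)=\sign(e)$ by Lemma \ref{lem1} and Lemma \ref{lem6}(ii), so $\int_0^1 e_t\varphi_\varepsilon'(w)\,dx$ approximates $\frac{d}{dt}\int_0^1|e|\,dx$. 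Concretely, for $t_1\le t_2$ we integrate over $[0,1]\times[t_1,t_2]$:
\begin{align*}
\int_{t_1}^{t_2}\!\!\int_0^1 e_t\,\varphi_{\varepsilon}'(w)\,dx\,dt = \int_{t_1}^{t_2}\Big[ w_x(1,t)\varphi_{\varepsilon}'(w(1,t)) - \int_0^1 \varphi_{\varepsilon}''(w)\,w_x^2\,dx\Big]dt,
\end{align*}
where we integrated by parts in $x$ and used $w_x(0,t)=0$. By Lemma \ref{lem6}(iii) the last integral is nonnegative. With $w_x(1,t)=-kw(1,t)$, the boundary term becomes $-k\,w(1,t)\varphi_\varepsilon'(w(1,t))$. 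Since $s\varphi_\varepsilon'(s)=s^2/\sqrt{s^2+\varepsilon^2}\ge 0$ and this tends to $|s|$, we get
\begin{align*}
\int_{t_1}^{t_2}\!\!\int_0^1 e_t\,\varphi_{\varepsilon}'(w)\,dx\,dt \le -k\int_{t_1}^{t_2} \frac{w(1,t)^2}{\sqrt{w(1,t)^2+\varepsilon^2}}\,dt .
\end{align*}
The regularity $e\in\mathscr{H}^{2+\beta,1+\beta/2}_{loc}$, which follows from that of $z$ and $\hat z$ together with $\Psi\in\mathscr{C}^3$, justifies these manipulations: $w$ is $\mathscr{C}^{2}$ in $x$ and $e_t$ is continuous.

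Next we pass to the limit $\varepsilon\to0$. On the right-hand side, monotone convergence gives $-k\int_{t_1}^{t_2}|w(1,t)|\,dt$. On the left-hand side, the integrand is bounded by $|e_t|$, which is continuous and hence bounded on the compact set $[0,1]\times[t_1,t_2]$, because $|\varphi_\varepsilon'|\le1$. It also converges pointwise to $e_t\,\sign(w)=e_t\,\sign(e)$ by Lemma \ref{lem6}(ii) and Lemma \ref{lem1}. Dominated convergence therefore yields
\begin{align*}
\int_{t_1}^{t_2}\!\!\int_0^1 e_t\,\sign(e)\,dx\,dt \le -k\int_{t_1}^{t_2}|w(1,t)|\,dt .
\end{align*}

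The remaining step, and the main obstacle, is to identify the left-hand side with $\int_0^1|e(x,t_2)|dx-\int_0^1|e(x,t_1)|dx$. The difficulty is that $|e|$ is not differentiable where $e=0$. We handle it pointwise in $x$ using the Lipschitz function $t\mapsto|e(x,t)|$. This function is absolutely continuous, and its a.e. derivative equals $\sign(e)e_t$: where $e\ne0$ this is clear, and on the set where $e(x,\cdot)=0$ the derivative of $|e|$ vanishes a.e., while $\sign(e)e_t=0$ there as well. Hence
\begin{align*}
|e(x,t_2)|-|e(x,t_1)|=\int_{t_1}^{t_2}\sign(e(x,t))\,e_t(x,t)\,dt,
\end{align*}
and integrating in $x$ with Fubini, which applies since the integrand is bounded, concludes \eqref{stab_L1}.

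An alternative for this last step avoids the a.e. argument. Use $\int_0^1\varphi_\varepsilon(e)\,dx$ as the functional and the identity $\partial_t\varphi_\varepsilon(e)=\varphi_\varepsilon'(e)e_t$, then pass to the limit via Lemma \ref{lem6}(i). However, that choice produces the cross term $\varphi_\varepsilon''(e)e_xw_x$, whose sign is unclear, so we prefer testing with $\varphi_\varepsilon'(w)$ as above.
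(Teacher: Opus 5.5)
Your proof is correct and is essentially the paper's argument. The paper differentiates $L_\varepsilon(t)=\int_0^1\varphi_\varepsilon(e)\,dx$, splits $\varphi_\varepsilon'(e)w_{xx}=\varphi_\varepsilon'(w)w_{xx}+(\varphi_\varepsilon'(e)-\varphi_\varepsilon'(w))w_{xx}$, integrates the first piece by parts exactly as you do, and kills the second in the limit by dominated convergence and $\sign(e)=\sign(w)$; you instead test directly with $\varphi_\varepsilon'(w)$ and recover the $L^1$ difference through the a.e.\ chain rule for the Lipschitz map $t\mapsto|e(x,t)|$. That add-and-subtract step is exactly how the $\int_0^1\varphi_\varepsilon(e)\,dx$ route avoids the cross term $\varphi_\varepsilon''(e)e_xw_x$ raised in your last paragraph, and the abandoned functional $W_\varepsilon$ at the start should simply be deleted.
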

\begin{proof}
Let $\varepsilon>0$ and $\varphi_{\varepsilon}$ be defined in Lemma \ref{lem6}. Moreover, let
\begin{align*}
L_{\varepsilon}(t) := \int_0^1 \varphi_{\varepsilon}\left(e(x,t)\right)dx \quad \forall t\geq 0. 
\end{align*}
Using (i) in Lemma \ref{lem6}, note that we have, for all $t\geq 0$, 
\begin{align*}
0 \leq L_{\varepsilon}(t) - \int_0^1 |e(x,t)|dx \leq \varepsilon.
\end{align*}
Hence, $L_\varepsilon(t) \to \int_0^1|e(x,t)|dx$ as $\varepsilon \to 0$, uniformly in $t$. Consequently, to establish \eqref{stab_L1}, we first analyze $L_\varepsilon$ and then we pass to the limit as $\varepsilon\to0$. 

Differentiating $L_{\varepsilon}$ with respect to time and using $\Sigma_e$, we obtain 
\begin{align*}
\dot{L}_{\varepsilon}(t) =&~ \int_0^1 \varphi_{\varepsilon}'(e(x,t))e_t(x,t)dx \\
=&~ \int_0^1 \varphi_{\varepsilon}'(e(x,t))w_{xx}(x,t)dx. 
\end{align*}
We rewrite the latter identity as 
\begin{align*}
\dot{L}_{\varepsilon}(t)  =&~ \int_0^1\varphi_{\varepsilon}'(w(x,t))w_{xx}(x,t)dx + \int_0^1\left(\varphi_{\varepsilon}'(e(x,t))-\varphi_{\varepsilon}'(w(x,t))\right)w_{xx}(x,t)dx. 
\end{align*}
Applying integration by parts, and using $w_x(1,t)=-kw(1,t)$, $w_x(0,t)=0$, and (iii), we obtain 
\begin{align*}
\int_0^1 \varphi_{\varepsilon}'(w(x,t))w_{xx}(x,t)dx =&~ \left[\varphi_{\varepsilon}'(w(x,t))w_{x}(x,t)\right]_{x=0}^{x=1} - \int_0^1 \varphi_{\varepsilon}''(w(x,t))w_{x}(x,t)^2 dx \\
\leq&~ - k\varphi_{\varepsilon}'(w(1,t))w(1,t).
\end{align*}
As a result, we have 
\begin{align*}
\dot{L}_{\varepsilon}(t) \leq&~ - k\varphi_{\varepsilon}'(w(1,t))w(1,t) +\int_0^1\left(\varphi_{\varepsilon}'(e(x,t))-\varphi_{\varepsilon}'(w(x,t))\right)w_{xx}(x,t)dx.
\end{align*}
Integrating in time from $t_1$ to $t_2$, we find 
\begin{align*}
L_{\varepsilon}(t_2) \leq&~ L_{\varepsilon}(t_1) - k\int_{t_1}^{t_2}\varphi_{\varepsilon}'(w(1,t))w(1,t)dt + \int_{t_1}^{t_2}\int_0^1\left(\varphi_{\varepsilon}'(e(x,t))-\varphi_{\varepsilon}'(w(x,t))\right)w_{xx}(x,t)dxdt. 
\end{align*}
We will now pass to the limit as $\varepsilon \to 0$. We have already shown that $L_{\varepsilon}(t) \to \int_{0}^1|e(x,t)|dx$ as $\varepsilon \to 0$, for all $t\geq 0$. Furthermore, from (ii) in Lemma \ref{lem6}, the fact that $t\mapsto w(1,t)$ is continuous hence bounded on the compact interval $[t_1,t_2]$, and the Lebesgue dominated convergence theorem, we have 
\begin{align*}
\int_{t_1}^{t_2}\varphi_{\varepsilon}'(w(1,t))w(1,t)dt  \to \int_{t_1}^{t_2}|w(1,t)|dt \ \ \text{as} \ \ \varepsilon \to 0. 
\end{align*}
Similarly, 
\begin{align*}
&\int_{t_1}^{t_2}\int_0^1\left(\varphi_{\varepsilon}'(e(x,t))-\varphi_{\varepsilon}'(w(x,t))\right)w_{xx}(x,t)dxdt \to \int_{t_1}^{t_2}\int_0^1\left(\sign(e(x,t))-\sign(w(x,t))\right)w_{xx}(x,t)dxdt
\end{align*}
as $\varepsilon \to 0$. However, from Lemma \ref{lem1}, we have $\sign(e)=\sign(w)$. Hence, 
\begin{align*}
\int_{t_1}^{t_2}\int_0^1\left(\varphi_{\varepsilon}'(e(x,t))-\varphi_{\varepsilon}'(w(x,t))\right)w_{xx}(x,t)dxdt \to 0
\end{align*}
as $\varepsilon \to 0$, which concludes the proof. 
\end{proof}

\subsection{Proof of Theorem \ref{thm1}}

Given Lemmas \ref{lem5} and \ref{lem7}, we can now prove Theorem \ref{thm1}. Define
\begin{align}
T^{\star} := \frac{1}{2\lambda \mu}, \label{Tstar}
\end{align} 
and let $t \geq T^{\star}$ and $s\in [t-T^{\star},t]$. Using Lemma \ref{lem7} and the
Cauchy-Schwarz inequality, we have
\begin{align*}
\int_0^1|e(x,t)|dx \leq \int_0^1|e(x,s)|dx
\leq \left(\int_0^1 e(x,s)^2dx\right)^{1/2}.
\end{align*}
The left-hand side of the latter inequality does not depend on $s$. Hence,
\begin{align*}
\left(\int_0^1|e(x,t)|dx\right)^2
\leq&~ \inf_{s\in [t-T^{\star},t]}\int_0^1e(x,s)^2dx \\
\leq&~ \frac{1}{T^{\star}}\int_{t-T^{\star}}^{t}\int_0^1e(x,s)^2dxds \\
\leq&~ \frac{1}{T^{\star}}\int_{t-T^{\star}}^{+\infty}\int_0^1e(x,s)^2dxds.
\end{align*}
Using \eqref{second_ineq} in Lemma \ref{lem5} and \eqref{Tstar}, it follows that
\begin{align*}
\left(\int_0^1|e(x,t)|dx\right)^2
\leq&~ \frac{1}{\mu T^{\star}}\exp\left(-2\lambda \mu \left(t-T^{\star}\right)\right)V(0) \\
=&~ 2\lambda \exp(1)\exp\left(-2\lambda \mu t\right)V(0).
\end{align*}
As a result, 
\begin{align*}
\int_0^1|e(x,t)|dx \leq \sqrt{2\lambda \exp(1)V(0)}\exp\left(-\lambda \mu t\right) \ \ \forall t\geq T^*. 
\end{align*}
However, note that, for all $x \in [0,1]$ and $t\geq 0$, we have
\begin{align*}
|E(x,t)| = \left|\int_0^xe(y,t)dy\right| \leq \int_0^1|e(y,t)|dy,
\end{align*}
and, in particular, $|M(t)| = |E(1,t)| \leq \int_0^1|e(y,t)|dy$. Substituting these two bounds
into the definition of $V$, we obtain
\begin{align*}
2V(t) \leq \left(\int_0^1|e(x,t)|dx\right)^2 + \frac{1}{k}\left(\int_0^1|e(x,t)|dx\right)^2.
\end{align*}
Hence, 
\begin{align*}
2V(t) \leq \frac{k+1}{k}\left(\int_0^1|e(x,t)|dx\right)^2 \ \ \forall t\geq 0.
\end{align*}
As a consequence, 
\begin{align*}
\int_0^1|e(x,t)|dx \leq&~ \sqrt{\frac{k+1}{k}\lambda \exp(1)} \int_0^1|e_o(x)|dx \times \exp\left(-\lambda \mu t\right) \ \ \forall t\geq T^*. 
\end{align*}
Using the relations $\lambda/k = \xi^2/(\xi \tan(\xi)) = \xi \cot(\xi)$ and $k=\xi \tan(\xi)$, we obtain  
\begin{align*}
\frac{k+1}{k} \lambda = \xi^2 + \xi \cot(\xi).
\end{align*}
As a result, using also Lemma \ref{lem7}, we conclude that \eqref{stab_ineq} holds for all $t\geq T^*$.

Let now $t \in [0,T^{\star})$. By \eqref{Tstar}, we have
\begin{align*}
C_1 \exp(-\lambda \mu t) > C_1\exp\left(-\lambda \mu T^{\star}\right)
= C_1 \exp\left(-\frac{1}{2}\right) > 1,
\end{align*}
so that $\min\left\{1,C_1\exp(-\lambda \mu t)\right\} = 1$, and \eqref{stab_ineq} reduces to
$\int_0^1|e(x,t)|dx \leq \int_0^1|e_o(x)|dx$, which is in Lemma \ref{lem7}.

\section{Example}

In this section, we evaluate Theorem \ref{thm1} via numerical simulations. 

\subsection{Numerical Scheme}

To simulate the fact that $\Sigma$ represents the physical continuous system, whereas $\hat{\Sigma}$ is the system implemented numerically in the calculator, we use a fine mesh to discretize $\Sigma$ and a coarser mesh to discretize $\hat{\Sigma}$. Accordingly, we consider two discretizations of the interval $[0,1]$: a set of points $\{x_i\}_{i=0}^{N}$ with $N\in \mathbb{N}^*$, $x_i=ih$ and $h=1/N$; and another set of points $\{\hat{x}_i\}_{i=0}^{\hat{N}}$ with $\hat{N} \in \mathbb{N}^*$, $\hat{x}_i = i \hat{h}$ and $\hat{h} = 1/\hat{N}$. We also consider two discretizations of the time interval $[0,t_f]$, where $t_f>0$ is the final time of simulation: a set of time instants $\{t_n\}_{n=0}^{N_t}$ with $N_t\in \mathbb{N}^*$, $t_n = n \Delta t$ and $\Delta t = t_f/N_t$; and another set of time instants $\{\hat{t}_m\}_{m=0}^{\hat{N}_t}$ with $\hat{N}_t = \frac{1}{r}N_t$ and $r\in \mathbb{N}^*$, $\hat{t}_m = m\widehat{\Delta t}$ and $\widehat{\Delta t} = t_f/\hat{N}_t = r\Delta t$. We discretize $\Sigma$ on $\{x_i\}_{i=0}^N\times \{t_n\}_{n=0}^{N_t}$, and we discretize $\hat{\Sigma}$ on $\{\hat{x}_i\}_{i=0}^{\hat{N}}\times \{\hat{t}_m\}_{m=0}^{\hat{N}_t}$. 

We start by describing the numerical scheme used to simulate $\Sigma$. Letting $\alpha(z) := \Psi'(z)$, we need to approximate 
$$\left(\alpha(z)z_x\right)_x = \alpha(z)_x z_x + \alpha(z)z_{xx}.$$
To this end, we employ a central difference scheme to approximate $\alpha (z(x_i))z_{xx}(x_i)$, i.e., 
\begin{align}
\alpha(z(x_i))z_{xx}(x_i) \approx \alpha(z_i)&\frac{z_{i+1}-2z_i+z_{i-1}}{h^2} \quad \forall i\in \{1,2,...,N-1\}, \label{approx_1}
\end{align}
where $z_i \approx z(x_i)$. Furthermore, we employ an Euler backward scheme to approximate $\alpha(z)_xz_x\big|_{x=x_i}$, i.e., 
\begin{align}
\alpha(z)_xz_x\big|_{x=x_i} \approx &\frac{(\alpha(z_i)-\alpha(z_{i-1}))(z_i-z_{i-1})}{h^2} \quad \forall i\in \{1,2,...,N-1\}.  \label{approx_2}
\end{align}
Combining \eqref{approx_1} and \eqref{approx_2}, we obtain 
\begin{align}
\left(\alpha(z)z_x\right)_x \big|_{x=x_i} \approx \alpha(z_i)&\frac{z_{i+1}-z_i}{h^2}-\alpha(z_{i-1})\frac{z_i-z_{i-1}}{h^2} \quad \forall i\in \{1,2,...,N-1\}. \nonumber 
\end{align}
To approximate the boundary conditions we let 
\begin{align*}
z_0 = z_1 \quad \text{and} \quad z_N=z_{N-1}. 
\end{align*}

For time discretization, we use a semi-implicit Euler backward scheme. That is, we let
\begin{align*}
\frac{z_i^{n+1}-z_i^n}{\Delta t} =&~ \alpha(z_i^n)\frac{z^{n+1}_{i+1}-z^{n+1}_i}{h^2}-\alpha(z^n_{i-1})\frac{z^{n+1}_i-z^{n+1}_{i-1}}{h^2} \quad \forall i\in \{1,2,...,N-1\},
\end{align*}
where $z_i^n \approx z_i(t_n)$.

As a result, $\Sigma$ is approximated by the tridiagonal system
\begin{equation*}
\Sigma_d : \left\lbrace 
\begin{aligned}
&\begin{bmatrix}
A_1(z_i^n) & A_2(z_i^n,z_{i-1}^n) & A_1(z_{i-1}^n)
\end{bmatrix} \begin{bmatrix}
z_{i+1}^{n+1} \\ z_i^{n+1} \\ z_{i-1}^{n+1}
\end{bmatrix} = z_i^n \quad \forall i\in \{1,2,...,N-1\}, \\
&z_0^{n+1} = z_1^{n+1}, \\
&z_N^{n+1} = z_{N-1}^{n+1},
\end{aligned}
\right.
\end{equation*}
where 
\begin{align*}
A_1(z_i^n) &:= -\frac{\Delta t}{h^2}\alpha (z_i^n),  \\
A_2(z_i^n,z_{i-1}^n) &:= 1 + \frac{\Delta t}{h^2}\left(\alpha (z_i^n) + \alpha (z_{i-1}^n)\right).
\end{align*}
This system is solved using the Thomas algorithm.

We turn now to the approximation of $\hat{\Sigma}$. Since $\widehat{\Delta t}$ is a multiple of $\Delta t$, we can write
\begin{align*}
\frac{\hat{z}_j^{m+1}-\hat{z}_j^m}{\widehat{\Delta t}} =&~ \alpha(\hat{z}_j^m)\frac{\hat{z}^{m+1}_{j+1}-\hat{z}^{m+1}_j}{\hat{h}^2}-\alpha(\hat{z}^m_{j-1})\frac{\hat{z}^{m+1}_j-\hat{z}^{m+1}_{j-1}}{\hat{h}^2} \quad \forall j\in \{1,2,...,\hat{N}-1\},
\end{align*}
where $\hat{z}_j^m \approx \hat{z}(\hat{x}_j,\hat{t}_m)$.

The left-end boundary condition is approximated by setting 
$$ \hat{z}_0^{m+1} = \hat{z}_1^{m+1}.$$
The right-end boundary condition is approximated by 
\begin{align*}
\alpha(\hat{z}_{\hat{N}}^m)\frac{\hat{z}_{\hat{N}}^{m+1}-\hat{z}_{\hat{N}-1}^{m+1}}{\hat{h}} = k\left(\Psi(z_N^{rm})-\Psi(\hat{z}_{\hat{N}}^m)\right). 
\end{align*}
As a result, $\hat{\Sigma}$ is approximated by the tridiagonal system 
\begin{equation*}
\hat{\Sigma}_d : \left\lbrace 
\begin{aligned}
&\begin{bmatrix}
A_1(\hat{z}_j^m) & A_2(\hat{z}_j^m,\hat{z}_{j-1}^m) & A_1(\hat{z}_{j-1}^m)
\end{bmatrix} \begin{bmatrix}
\hat{z}_{j+1}^{m+1} \\ \hat{z}_j^{m+1} \\ \hat{z}_{j-1}^{m+1}
\end{bmatrix} = \hat{z}_j^m \quad \forall j\in \{1,2,...,\hat{N}-1\}, \\
&\hat{z}_0^{m+1} = \hat{z}_1^{m+1}, \\
&\hat{z}_{\hat{N}}^{m+1} = \hat{z}_{\hat{N}-1}^{m+1} + k\hat{h} \left(\frac{\Psi(z_{N}^{rm})-\Psi(\hat{z}_{\hat{N}}^{m})}{\alpha(\hat{z}_{\hat{N}}^m)}\right),
\end{aligned}
\right.
\end{equation*}
that is solved using the Thomas algorithm. 

We set $N=60$, $\Delta t = 10^{-4}$, $\hat{N}=30$, and 
$\widehat{\Delta t} = 2\Delta t$.

\subsection{Simulation Results}

Let 
\begin{align*}
\Psi(z) := \int_0^z (s^2+1)^{5/4}ds, 
\end{align*}
so that 
\begin{align*}
\Psi'(z) = (z^2+1)^{5/4} \geq 1 >0. 
\end{align*}
The resulting equation $\Sigma$ models ion temperature along a magnetic field line in the scrape-off layer of a tokamak \cite[Pages 652--684]{plasma}.

The system's initial condition is taken as
\begin{align}\label{eq_zo_sim}
z_o(x) := 1 + 0.3\cos(\pi x),
\end{align}
which is smooth and satisfies $z_o'(0)=z_o'(1)=0$, hence
$\Psi(z_o)_x|_{x=0} = \Psi(z_o)_x|_{x=1} = 0$.

The observer's initial condition is set as 
\begin{align}\label{eq_zho_sim}
\hat{z}_o := \Psi^{-1}\Big( \Psi(z_o) - A\cos(\xi x) \Big),
\end{align}
where $A\in\mathbb{R}$, and $\xi$ is the unique solution in $ (0,\pi/2)$ to $\xi\tan(\xi)=k$. One can readily verify that $\hat{z}_o$ satisfies the required regularity and compatibility conditions. 

Figures \ref{fig1} shows the function $t\mapsto |e(\cdot,t)|_{L^1}$, for $k=1$ and $A \in \{20,200,2000\}$, on a semi-logarithmic scale. As expected from Theorem \ref{thm1}, the norm of the estimation error converge exponentially to zero.  
\begin{figure}
    \centering
    \includegraphics[width=\linewidth]{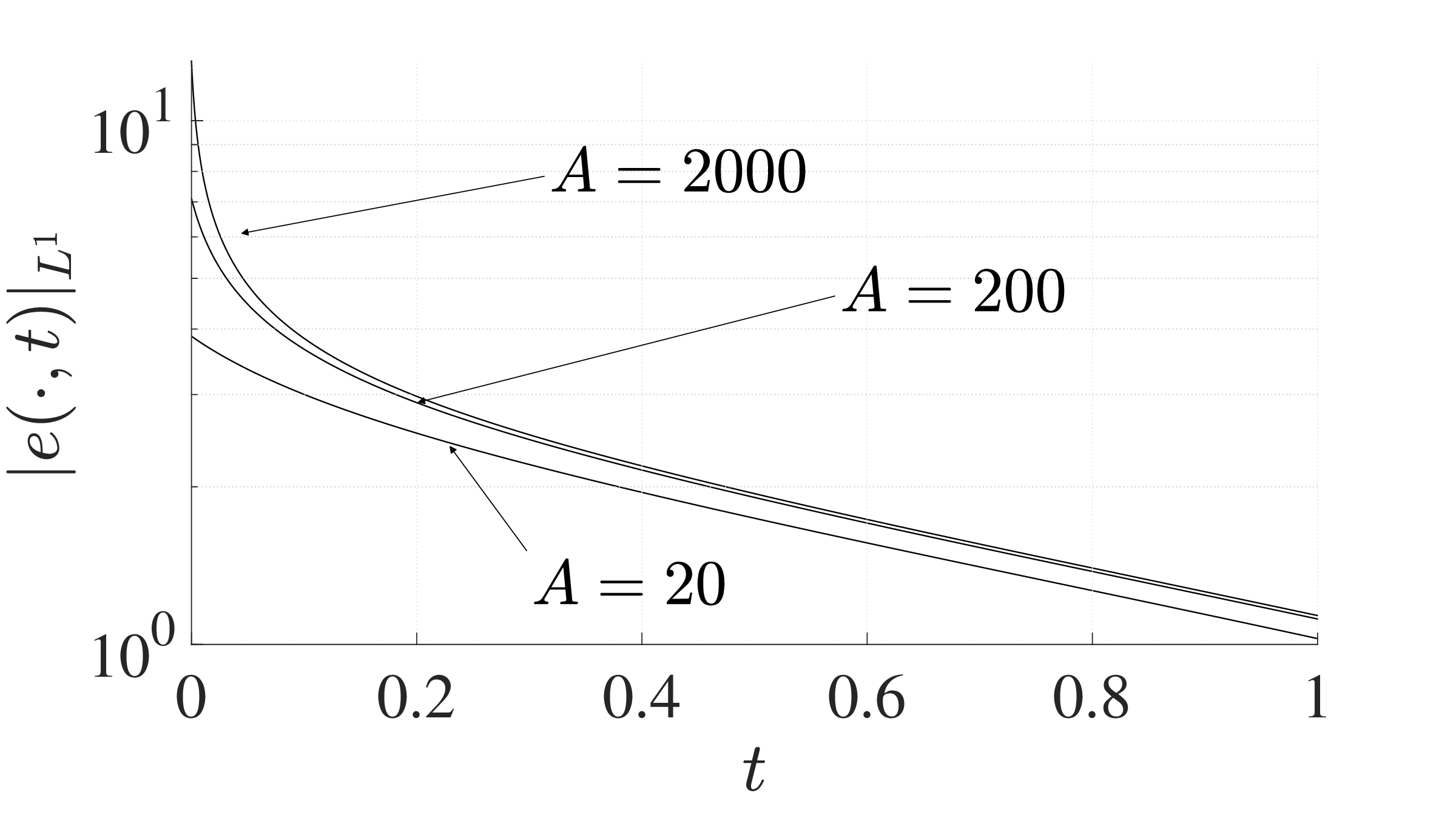}
    \caption{The $L^1$ norm of the estimation error for different sizes of the initial estimation error.}
    \label{fig1}
\end{figure}

Next, Figure \ref{fig3} shows $|e(\cdot,t)|_{L^1}/|e_o|_{L^1}$ for $k \in \{1,10,20,30\}$, with $A = 20$. It can be noted that the convergence rate increases with $k$, but the improvement is diminishing: the gap between $k=1$ and $k=10$ is larger than the gap between $k=20$ and $k=30$. This is expected from the fact that $\lambda \in (0,\pi^2/4)$ in Remark \ref{rem1}.

\begin{figure}
    \centering
    \includegraphics[width=\linewidth]{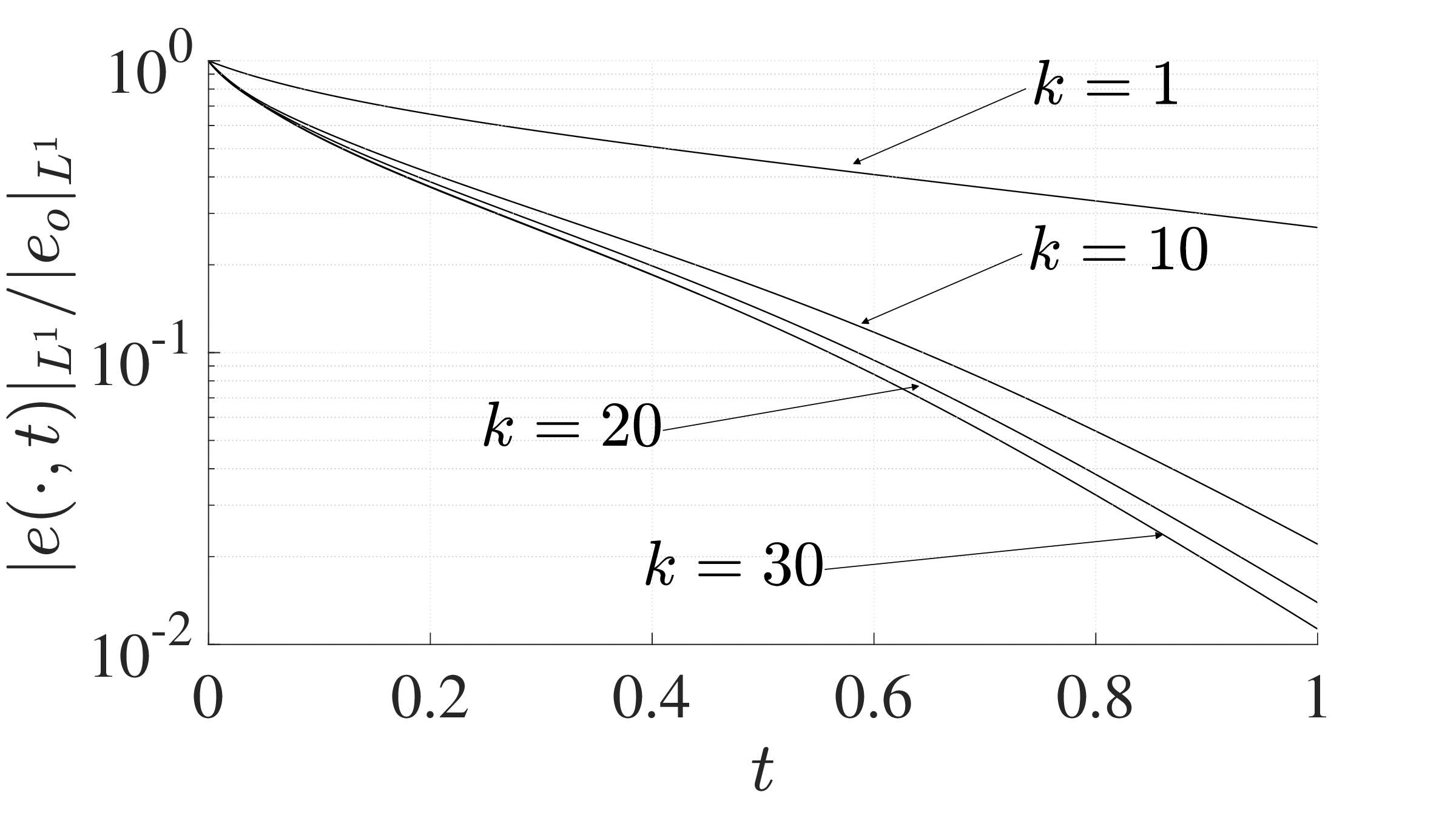}
    \caption{The ratio of the $L^1$ norm of the estimation error over the $L^1$ norm of the initial estimation error, for different values of the observer gain $k$.}
    \label{fig3}
\end{figure}

Finally, Figure \ref{fig4} shows $t\mapsto |e(\cdot,t)|_{L^1}$ for $k=5$, $A=2$ and an output corrupted by
additive noise,
\begin{align*}
y_{\rm meas}(t) = y(t)+\nu(t),
\end{align*}
with $\nu$ drawn independently at each instant from a centred Gaussian distribution of
standard deviation $\sigma_{\rm n} \in \{0,10^{-2},10^{-1}\}$. Theorem \ref{thm1} does not cover this case, but by adapting the proof of Lemma \ref{lem7}, one can readily show an $L^1$-input-to-state stability result with respect to the noise, which is in agreement with Figure \ref{fig4}.

\begin{figure}
    \centering
    \includegraphics[width=\linewidth]{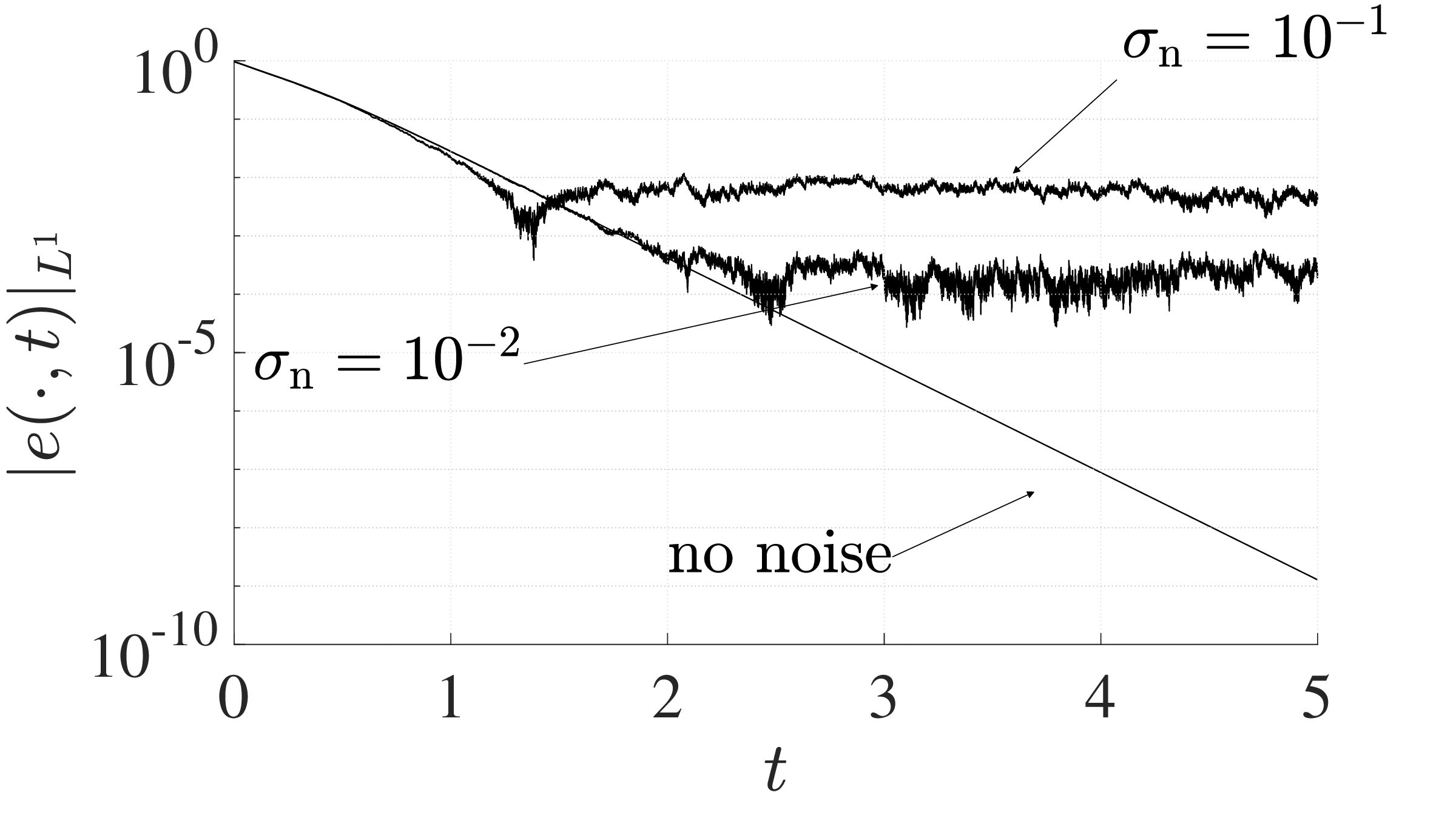}
    \caption{The $L^1$ norm of the estimation error in the presence of output noise.}
    \label{fig4}
\end{figure}
\section{Conclusion and Perspectives}

We have designed a nonlinear observer for a one-dimensional quasilinear diffusion equation with Neumann boundary conditions, using the state at one end of the spatial domain as an output. We have proved global exponential stability of the zero equilibrium for the estimation error dynamics in the $L^1$ norm. The result holds for a general nonlinear diffusivity, under the only assumption that it is bounded from below by a strictly positive constant.

Several directions remain to be considered. First, it would be interesting to establish global exponential stability in the $L^p$ norm for $p>1$. Moreover, extension to higher-dimensional domains is important. Finally, extending the results from classical solutions to weak solutions would require alternative arguments at some steps of the proof.

\section{Acknowledgement}
The support of a grant from National Science and Engineering Research Council of Canada for this research is gratefully acknowledged.

\end{document}